\documentclass[12pt]{article}
\usepackage[utf8]{inputenc}

\usepackage[margin=1in]{geometry}
\usepackage{amsfonts,amssymb,amsthm,amsmath,graphicx}
\usepackage{mathrsfs}
\usepackage{float}
\usepackage{listings}
\usepackage{hyperref}
\usepackage{thmtools, thm-restate, cleveref}
\usepackage{verbatim}
\usepackage{mathtools}
\usepackage{enumitem}
\usepackage[nolabel, final]{showlabels}
\usepackage{xcolor}
\usepackage{bm} 
\usepackage{caption}
\usepackage{multirow}
\hypersetup{
    colorlinks,
    linkcolor={red!50!black},
    citecolor={blue!50!black},
    urlcolor={blue!80!black}
}

\newcommand{\norm}[1]{\left\lVert#1\right\rVert}

\newcommand{\E}[1]{\mathbb{E}\left[ #1 \right]}


\newcommand{\mtx}[1]{\bm{#1}}
\newcommand{\A}{\mtx{A}}
\def \B {\mtx{B}}
\def \Z {\mtx{Z}}
\def \X {\mtx{X}}
\def \Y {\mtx{Y}}
\def \I {\mtx{I}}

\newtheorem{theorem}{Theorem}
\newtheorem{corollary}{Corollary}[theorem]

\newtheorem{prop}{Proposition}
\newtheorem{remark}{Remark}

\definecolor{boxbackground}{rgb}{1,0.976, 0.882}

\lstset{
    language=Python,
    basicstyle=\scriptsize\ttfamily,
    keywordstyle=\color{blue}\ttfamily,
    stringstyle=\color{red}\ttfamily,
    commentstyle=\color{green!50!black}\ttfamily,
    frame=single,
    breaklines=true,
    postbreak=\raisebox{0ex}[0ex][0ex]{\ensuremath{\color{violet}\hookrightarrow\space}},
    backgroundcolor = \color{boxbackground},
    showspaces=false,
    showstringspaces=false,
}

\title{Concentration inequalities for random matrix products}
  
\author{Amelia Henriksen and Rachel Ward\thanks{Oden Institute for Computational Engineering and Sciences, University of Texas, Austin, TX (email: amelia@ices.utexas.edu, rward@math.utexas.edu).  This material is based upon work supported in part by AFOSR MURI Award N00014-17-S-F006.}}

\begin{document}
\maketitle

\begin{abstract}
Suppose $\{\X_k\}_{k \in \mathbb{Z}}$ is a sequence of bounded independent random matrices with common dimension  $d\times d$ and common expectation $\E{ \X_k }= \X$.  Under these general assumptions, the normalized random matrix product
$$\Z_n = (\I_d + \frac{1}{n}\X_n)(\I_d + \frac{1}{n}\X_{n-1}) \cdots (\I_d + \frac{1}{n}\X_1)$$
converges to $\Z_n \rightarrow e^{\X}$ as $n \rightarrow \infty$.  Normalized random matrix products of this form arise naturally in stochastic iterative algorithms, such as Oja's algorithm for streaming Principal Component Analysis.
Here, we derive \emph{nonasymptotic concentration inequalities} for such random matrix products.  In particular, we show that the spectral norm error satisfies $\| \Z_n - e^{\X} \| = O((\log(n))^2\log(d/\delta)/\sqrt{n})$ with probability exceeding $1-\delta$. This rate is sharp in $n$, $d$, and $\delta$, up to possibly the $\log(n)$ and $\log(d)$ factors. 
The proof relies on two key points of theory: the Matrix Bernstein inequality concerning the concentration of sums of random matrices, and Baranyai's theorem from combinatorial mathematics.
Concentration bounds for general classes of random matrix products are hard to come by in the literature, and we hope that our result will inspire further work in this direction. 
\end{abstract}

\section{Introduction}
A classical limit theorem from complex analysis reads: \emph{Let $(u_n)_{n \in \mathbb{N}}$ be a uniformly bounded complex sequence whose mean $\frac{1}{n} \sum_{k=0}^{n-1} u_n$ converges towards $\mu$.  Then  }
\begin{equation}
\label{eq:classical limit law}
\lim_{n \rightarrow \infty} \displaystyle \prod \limits_{k=0}^{n-1} \left(1 + \frac{u_k}{n} \right) = e^{\mu}.
\end{equation}
This result is easily verified by taking the natural logarithm of each side, and observing that 
$\log \left( \prod \limits_{k=0}^{n-1} \left(1 + \frac{u_k}{n} \right) \right) \approx \frac{1}{n} \sum_{k=0}^{n-1} u_k \rightarrow \mu$.  A non-commutative extension of this result was recently proven by Emme and Hubert in \cite{limitlaw2018}:
\begin{prop}
\label{limitlaw}
Let $(\A_n)_{n \in \mathbb{N}}$ be a sequence of $d \times d$ complex matrices satisfying 
$$
\lim_{n \rightarrow \infty}  \frac{1}{n} \sum_{k=1}^{n} \A_k = \A
$$ 
and such that $(\frac{1}{n} \sum_{k=1}^{n} \| \A_k \| )_{n \in \mathbb{N}}$ is bounded for a norm $\| \cdot \|$ by $\alpha$.  Consider the matrix product
$$
\Z_n = \left( \I_d + \frac{1}{n} \A_1 \right) \dots \left( \I_d + \frac{1}{n} \A_n \right).
$$
Then 
$$
\lim_{n \rightarrow \infty} \Z_n = e^{\A}.
$$
\end{prop}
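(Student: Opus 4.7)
The plan is to expand both $\Z_n$ and $e^{\A}$ as power series in $1/n$, control the tails uniformly in $n$, and then prove convergence term-by-term in the truncated series.

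Expanding the product by multilinearity yields $\Z_n = \sum_{k=0}^{n} \frac{1}{n^k} P_k(n)$, where $P_k(n) := \sum_{1 \leq i_1 < \cdots < i_k \leq n} \A_{i_1}\cdots\A_{i_k}$. Using the elementary-symmetric-polynomial bound $e_k(x_1,\ldots,x_n) \leq (x_1+\cdots+x_n)^k/k!$ for nonnegative reals together with the triangle inequality, one obtains
$$\left\|\frac{1}{n^k}P_k(n)\right\| \leq \frac{1}{k!}\left(\frac{1}{n}\sum_{i=1}^n \|\A_i\|\right)^k \leq \frac{\alpha^k}{k!}.$$
Thus both tails $\sum_{k>K}(1/n^k)P_k(n)$ and $\sum_{k>K}\A^k/k!$ are uniformly (in $n$) bounded in norm by $\sum_{k>K}\alpha^k/k!$, which tends to zero as $K\to\infty$.

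The heart of the proof is to show, for each fixed $k$, that $\frac{1}{n^k}P_k(n)\to\frac{\A^k}{k!}$; I would proceed by induction on $k$. The base case $k=1$ is the hypothesis $\bar{\A}_n := \frac{1}{n}\sum_i\A_i\to\A$. For the inductive step, note the recursion $P_k(n) = \sum_{i=k}^{n} P_{k-1}(i-1)\,\A_i$. By the inductive hypothesis write $P_{k-1}(m) = \frac{m^{k-1}}{(k-1)!}\A^{k-1} + m^{k-1}\epsilon_m$ with $\|\epsilon_m\|\to 0$, so
$$\frac{1}{n^k}P_k(n) = \frac{\A^{k-1}}{(k-1)!}\cdot\frac{1}{n^k}\sum_{i=k}^n (i-1)^{k-1}\A_i \;+\; \frac{1}{n^k}\sum_{i=k}^n (i-1)^{k-1}\epsilon_{i-1}\A_i.$$
For the main (first) term, Abel summation using the partial sums $T_m = m\bar{\A}_m = m\A + o(m)$ reduces the inner sum to a Riemann-sum weighted average $\frac{1}{n}\sum_i(i/n)^{k-1}\bar{\A}_i\to\A\int_0^1 t^{k-1}\,dt = \A/k$, giving the target $\A^{k}/k!$.

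The main obstacle is the error term: we have only a Ces\`aro bound on $\|\A_i\|$ (not a pointwise one), and the $\epsilon_m$ vanish only asymptotically. The fix is the standard splitting-at-a-threshold trick. Given $\delta>0$, choose $N$ with $\|\epsilon_m\|<\delta$ for $m\geq N$. For $i\leq N$, the sum has a bounded, $n$-independent number of terms, so division by $n^k$ sends its contribution to zero. For $i>N$, the naive bound $(i-1)^{k-1}\leq n^{k-1}$ combined with the Ces\`aro hypothesis gives
$$\frac{1}{n^k}\sum_{i>N}(i-1)^{k-1}\|\epsilon_{i-1}\|\,\|\A_i\| \;\leq\; \delta\cdot\frac{1}{n}\sum_{i=1}^n\|\A_i\| \;\leq\; \delta\alpha.$$
Taking $n\to\infty$ and then $\delta\to 0$ closes the induction; a final $\varepsilon/3$ argument combining the uniform tail bound with term-by-term convergence for $k\leq K$ completes the proof.
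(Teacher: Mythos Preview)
The paper does not actually prove Proposition~\ref{limitlaw}; it is quoted from Emme and Hubert \cite{limitlaw2018}, and the paper only remarks that ``the proof \dots\ is not a straightforward extension of the scalar result.'' So there is no in-paper argument to compare against. What can be said is that your expansion $\Z_n=\sum_{k\ge 0} n^{-k}P_k(n)$ is exactly the decomposition $\Z_n=\I_d+\sum_k \Z_{n,k}$ that the paper introduces for its main theorem (equation~\eqref{eq:Znk}); the paper then goes in a different direction---Baranyai plus Matrix Bernstein---because it is after quantitative concentration in the random case, not the deterministic limit law.

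Your argument itself is sound. The uniform tail bound via $e_k(x)\le(\sum x_i)^k/k!$ is correct and gives the needed $\alpha^k/k!$ envelope for both series. The recursion $P_k(n)=\sum_{i=k}^n P_{k-1}(i-1)\A_i$ is the right way to set up the induction, and your handling of the error term is exactly the point where the Ces\`aro-only hypothesis bites: splitting at a threshold $N$, using that $\|\epsilon_m\|$ is \emph{a priori} bounded by $2\alpha^{k-1}/(k-1)!$ for all $m$ (from the same tail estimate), and then invoking $\frac{1}{n}\sum\|\A_i\|\le\alpha$ is the correct move. The one place your write-up is a bit compressed is the Abel-summation step for the main term: summation by parts does not literally produce $\frac{1}{n}\sum_i (i/n)^{k-1}\bar{\A}_i$, but rather a boundary term $\frac{(n-1)^{k-1}}{n^k}T_n\to\A$ minus $\frac{1}{n^k}\sum_i\bigl[i^{k-1}-(i-1)^{k-1}\bigr]\,i\,\bar{\A}_i\to\frac{k-1}{k}\A$, whose difference is indeed $\A/k$. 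Since $\bar{\A}_i$ is bounded by $\alpha$ and convergent, the Riemann-sum reasoning you invoke applies to this latter sum without difficulty. With that clarification the proof goes through.
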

The proof of Theorem \ref{limitlaw} is not a straightforward extension of the scalar result.  The matrix product is non-commutative in general, $\A\B \neq \B \A$, and so of course $\log(\A\B) \neq \log(\A) + \log(\B)$ fails to hold in turn.  


\noindent An important special case within the framework of Proposition \ref{limitlaw} is when the $\A_k$ are uniformly bounded independent random matrices with common expectation $\E{\A_k} = \A$.  Then $\Z_n$ is also a random matrix, and has expectation $\E{\Z_n} = (\I_d + \frac{1}{n} \A)^n$.  Within this framework, it is natural to ask about about \emph{rates of convergence} of $\Z_n$ to $e^{\A}$.   As far as we are aware, precise rates of convergence for matrix products of the form $\Z_n$ have not appeared in the literature before, despite such random matrix products naturally arising in stochastic iterative algorithms such as stochastic gradient descent; in particular, in Oja's algorithm for estimating the top eigenvector of the covariance matrix of a distribution of matrices observed sequentially    \cite{streaming1, streaming2, streaming3, streaming4, streaming5, 2016Jain, allen2017first}.    Here, as the main content of this paper, we derive a rate of convergence for matrix products of this form.

\begin{theorem}[Main Theorem]
\label{thm:main}
Consider a sequence $\{ \X_k \}_{k \in \mathbb{Z}}$ of independent (real or complex-valued) random matrices with common dimension $d \times d$.  Assume that
$$
\E{ \X_k } = \X \quad \quad \text{and} \quad \quad \| \X_k \| \leq L \quad \text{for each index } k.
$$
Introduce the sequence of random matrices $\{ \Z_n \}_{n \in \mathbb{N}}$ given by 
\begin{equation}
\label{eq:Zn}
\Z_n = (\I_d + \frac{1}{n}\X_n)(\I_d + \frac{1}{n}\X_{n-1}) \cdots (\I_d + \frac{1}{n}\X_1).
\end{equation}
Suppose that $L > 0$, $n, d \in \mathbb{Z},$ and $\delta \in (0, 1/2]$ are such that
\begin{equation}
\label{eq:restriction1}
\max\{3, \lceil{ L e^2 \rceil} \} \leq  \log(n) +1 \leq \left( \frac{16 n}{\log(d/\delta) + \log(ne)} \right)^{1/3}
\end{equation}  Then with probability exceeding $1 - 2\delta$, the following holds:
\begin{align*}
\norm{\Z_n - e^{\X}} &\leq \frac{2L  e^L \log(n)}{\sqrt{n}} \left( 2 \sqrt{ \log( 2d/\delta) + (\log(n))^2} +  \frac{\log(n)}{\sqrt{n}} \right)  + \frac{ L^2 e^L }{2n}
\end{align*}
where $\| \cdot \|$ denotes the matrix spectral norm.
\end{theorem}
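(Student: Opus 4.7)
The first step is to decompose the error via the triangle inequality into a deterministic approximation and a stochastic fluctuation:
\[
\norm{\Z_n - e^{\X}} \leq \norm{\Z_n - \E{\Z_n}} + \norm{\E{\Z_n} - e^{\X}}.
\]
Expanding the product defining $\Z_n$ and using independence together with $\E{\X_k}=\X$ gives $\E{\Z_n} = \sum_{m=0}^n \binom{n}{m} n^{-m} \X^m = (\I + \X/n)^n$, so the second term above is deterministic. A direct series comparison of $(\I + \X/n)^n$ with $e^{\X}$, using $\norm{\X} \le L$, yields a bound of order $L^2 e^L/(2n)$, which accounts for the last term of the theorem.

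The bulk of the work is the random term $\norm{\Z_n - \E{\Z_n}}$. Fully expanding, I would write
\[
\Z_n - \E{\Z_n} \;=\; \sum_{m=1}^{n} \frac{1}{n^m} \sum_{\substack{S \subseteq [n] \\ |S| = m}} \bigl( \X_{k_m}\X_{k_{m-1}}\cdots \X_{k_1} - \X^m \bigr),
\]
where $S = \{k_1 < \cdots < k_m\}$. Within a given degree $m$, the $\binom{n}{m}$ summands are \emph{not} independent because they share random factors, which rules out a direct application of matrix Bernstein. This is precisely where Baranyai's theorem enters: when $m \mid n$, the $m$-subsets of $[n]$ partition into $\binom{n-1}{m-1}$ parallel classes, each of which decomposes $[n]$ into $n/m$ disjoint blocks. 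Within one parallel class $\mathcal{C}$, the terms $\X_B - \X^m$ for $B \in \mathcal{C}$ are mutually independent, zero-mean, and bounded by $2L^m$ in spectral norm, so matrix Bernstein yields
\[
\Big\|\sum_{B \in \mathcal{C}}(\X_B - \X^m)\Big\| \;\lesssim\; L^m \sqrt{(n/m)\,t} \;+\; L^m t
\]
with failure probability at most $2d\,e^{-t}$. When $m \nmid n$, I would pad the index set with deterministic copies of $\X$ up to a multiple of $m$ and subtract off the easily controlled contribution of the padding.

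Assembling the estimate, I would triangle-inequality-sum over the $\binom{n-1}{m-1}$ parallel classes at level $m$, union-bound Bernstein over the $M := \lceil \log n\rceil$ surviving levels (truncating the series at $m = M$), and then triangle-sum across $m$. The identity $\binom{n-1}{m-1} n^{-m} \le (n\,(m-1)!)^{-1}$ absorbs the combinatorial growth, and $\sum_{m\ge 1} L^m/((m-1)!\sqrt{m}) \le L e^{L}$ produces the overall rate $L e^{L}/\sqrt{n}$. Because the union bound over parallel classes introduces $\log \binom{n-1}{m-1} \le m\log n$ into the Bernstein parameter $t$, and we then maximize over $m \le M$, the resulting confidence term becomes $\log(2d/\delta) + M\log n \le \log(2d/\delta) + (\log n)^2$ inside the square root, exactly matching the theorem. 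The deterministic tail $\sum_{m>M} L^m/m!$ from truncation above $M$ is controlled by hypothesis \eqref{eq:restriction1}, which ensures $\log n$ comfortably dominates $L$.

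The main technical obstacle I anticipate is the careful handling of Baranyai's theorem when $m \nmid n$: the padding argument must produce genuinely independent blocks within each parallel class while keeping the resulting error contribution at most the order of the main estimate. A secondary challenge is the precise constant-tracking needed to recover the explicit bound in the theorem; in particular, the interplay between the Bernstein variance term (which dominates and produces the $\sqrt{\log(2d/\delta)+(\log n)^2}$ factor) and the Bernstein boundedness term (which must remain subdominant for \emph{every} $m \le M$ simultaneously, explaining the cube-root condition in \eqref{eq:restriction1}) demands careful bookkeeping through all levels of the expansion.
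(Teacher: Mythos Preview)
Your proposal is correct and follows essentially the same route as the paper: the same triangle-inequality split into $\norm{\Z_n-\E{\Z_n}}+\norm{(\I+\X/n)^n-e^{\X}}$, the same degree-by-degree expansion truncated at $\lceil\log n\rceil$ with a deterministic tail bound, and the same key idea of invoking Baranyai's theorem to group the $m$-subsets into parallel classes of pairwise-disjoint blocks so that matrix Bernstein applies within each class, followed by a union bound over classes and levels. The only cosmetic difference is that the paper handles $m\nmid n$ by \emph{dropping} the last $p<m$ indices and bounding the resulting remainder $\mtx{D}_k$ deterministically, rather than by padding with copies of $\X$ as you suggest; both devices serve the same purpose and lead to the same estimate.
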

\noindent Theorem \ref{thm:main} immediately implies a bound on the expected value of $\norm{\Z_n - e^{\X}}.$  Note that 
$\| \Z_n \| \leq e^L$ and $ \| e^{\X} \| \leq e^L$, so for any $\delta > 0$ satisfying \eqref{eq:restriction1}, 
\begin{equation}
\mathbb{E} \norm{\Z_n - e^{\X}} \leq (1-2\delta) \left( \frac{2L  e^L \log(n)}{\sqrt{n}} \left( 2 \sqrt{ \log( 2d/\delta) + (\log(n))^2} +  \frac{\log(n)}{\sqrt{n}} \right)  \right) + \frac{ L^2 e^L }{2n} 
   + 4\delta e^L \nonumber
\end{equation}
In particular, setting $\delta = \frac{L^2}{8n}$ gives 
$$
\mathbb{E} \norm{\Z_n - e^{\X}} \leq \left( \frac{2L  e^L \log(n)}{\sqrt{n}} \left( 2 \sqrt{ \log( 2d/\delta) + (\log(n))^2} +  \frac{\log(n)}{\sqrt{n}} \right)  \right) + \frac{ L^2 e^L }{n}.
$$

\noindent Note that the $O(\frac{1}{\sqrt{n}})$ convergence rate is unavoidable under the stated assumptions.  Indeed, consider the scalar case $d=1$, where $\{ x_j\}_{j=1}^n $ is a sequence of independent real-valued mean-zero scalars, bounded uniformly by $| x_j | \leq L$.  In this case, as $\frac{1}{n} \sum_{j=1}^n x_j $ becomes sufficiently small, $ \frac{1}{n} \sum_{j=1}^n x_j$ and $\log( \prod \limits_{j=1}^n (1 + \frac{x_j}{n}) )$ are nearly equivalent.  Thus, applying the standard scalar Bernstein inequality to $ \frac{1}{n} \sum_{j=1}^n x_j$ results in a bound of the form $|  \prod \limits_{j=1}^n (1 + \frac{x_j}{n})    - 1  | \leq C L \frac{\sqrt{\log(1/\delta)}}{\sqrt{n}}$. It remains open whether the $\log(n)$ and $\log(d)$ factors in the rate given by Theorem \ref{thm:main} can be removed, and also whether the dependence on $L$ can be improved.

\begin{remark} \emph{ Limit laws for products of random matrices have been extensively analyzed in the context of ergodic theory or martingales on Markov chains -- see for instance the book \cite{2016Benoist} or the extensive survey articles \cite{2002Furman, 2001Ledrappier}.  However, results in the form of \emph{quantitative rates of convergence} of general random matrix products are quite scarce, apart from specialized cases such as for products of i.i.d. \emph{Gaussian} random matrices.  Surprisingly, for the random matrix product $\Z_n$ we consider \eqref{eq:Zn}, a rigorous proof of the limiting behavior $\Z_n \rightarrow \exp(X)$ appears to have only been proven recently \cite{limitlaw2018}, even though a seemingly incomplete proof of this limiting behavior was provided as Theorem $7$ of the 1984 paper \cite{clt1984}.}
\end{remark}

\noindent {\bf Notation.} Throughout,  $\| \X\|$ refers to the spectral norm of the matrix $\X$.  For an integer $n \geq 1$, we use the notation $[n]$ to refer to the set $\{1, 2, \dots, n\}$.  We write ${\bf Prob}[E]$ to refer to the probability of the event $E$.  

\section{Preliminaries}

A crucial ingredient of the proof of Theorem \ref{thm:main} is the matrix Bernstein inequality, a matrix-level extension of the classical scalar Bernstein inequality describing the upper tail of a sum of independent bounded or sub-exponential random variables.  The first matrix Bernstein type bound was derived by Ahlswede and Winter \cite{aw2003}, and subsequently improved by Tropp \cite{tropp2010} by applying Lieb's theorem in place of the Golden-Thompson inequality.     We use the variant of the matrix Bernstein inequality of Tropp stated below.

\begin{prop}[Matrix Bernstein Inequality (Theorem 6.1.1 in \cite{2015Tropp})]
\label{prop:bernstein}
Consider a finite sequence $\{ \mtx{S}_k \}$ of independent random matrices with common dimension $d_1 \times d_2$.  Assume that
$$
\E{ \mtx{S}_k } = 0 \quad \quad \text{and} \quad \quad \| \mtx{S}_k \| \leq L \quad \text{for each index } k.
$$
Introduce the random matrix 
$$
\mtx{Z} = \sum_k \mtx{S}_k.
$$
Let $v(\mtx{Z})$ be the matrix variance statistic of the sum:
\begin{align}
v(\mtx{Z}) &= \max\{ \| \E{ \mtx{Z} \mtx{Z}^{*} } \|, \| \E{ (\mtx{Z}^{*} \mtx{Z})} \| \}  \\
&= \max\{ \| \sum_k \E{ \mtx{S}_k \mtx{S}_k^{*} } \|, \| \sum_k \E{ \mtx{S}_k^{*} \mtx{S}_k} \|  \}.
\end{align}
Then, for all $t \geq 0$,
$$
{\bf Prob} \{ \| \mtx{Z} \| \geq t \} \leq (d_1 + d_2) \exp\left( \frac{-t^2/2}{v(\mtx{Z}) + Lt/3} \right).
$$
\end{prop}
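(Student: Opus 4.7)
The plan is to follow the matrix Laplace transform method combined with Lieb's concavity theorem, which is the standard proof strategy for matrix Bernstein-type bounds. First, I would reduce the rectangular setting to a Hermitian one via the dilation $\mathcal{H}(\mtx{S}_k) = \left(\begin{smallmatrix} 0 & \mtx{S}_k \\ \mtx{S}_k^{*} & 0 \end{smallmatrix}\right)$, a Hermitian $(d_1+d_2)\times(d_1+d_2)$ matrix satisfying $\|\mathcal{H}(\mtx{S}_k)\| = \|\mtx{S}_k\|$, $\mathcal{H}(\mtx{S}_k)^2 = \operatorname{diag}(\mtx{S}_k\mtx{S}_k^{*},\, \mtx{S}_k^{*}\mtx{S}_k)$, and $\|\mtx{Z}\| = \lambda_{\max}(\mathcal{H}(\mtx{Z}))$ together with $\lambda_{\max}(\E{\mathcal{H}(\mtx{Z})^2}) = v(\mtx{Z})$. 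It therefore suffices to prove the tail bound for a sum of independent Hermitian bounded zero-mean matrices in ambient dimension $d := d_1+d_2$.

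Second, for a Hermitian random matrix $\mtx{Y} = \sum_k \mtx{T}_k$ and parameter $\theta > 0$, the matrix Markov inequality gives
$$
\mathbf{Prob}\{\lambda_{\max}(\mtx{Y}) \geq t\} \;\leq\; e^{-\theta t}\, \E{\operatorname{tr}\exp(\theta \mtx{Y})}.
$$
The crux is to control $\E{\operatorname{tr}\exp(\sum_k \theta\mtx{T}_k)}$. I invoke Lieb's theorem, which states that $\mtx{H}\mapsto \operatorname{tr}\exp(\mtx{H}+\log\mtx{A})$ is concave in Hermitian $\mtx{H}$ for each fixed positive definite $\mtx{A}$. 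Applying Jensen's inequality repeatedly, conditioning on all $\mtx{T}_j$ with $j\neq k$ one index at a time, yields the noncommutative subadditivity of cumulants
$$
\E{\operatorname{tr}\exp\bigl(\textstyle\sum_k \theta\mtx{T}_k\bigr)} \;\leq\; \operatorname{tr}\exp\bigl(\textstyle\sum_k \log \E{e^{\theta \mtx{T}_k}}\bigr).
$$

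Third, I would bound the matrix cumulant generating function of each summand. The scalar inequality $e^{\theta x} \leq 1 + \theta x + g(\theta)\, x^2$, with $g(\theta) = \tfrac{\theta^2/2}{1-L\theta/3}$, holds for $x\in[-L,L]$ and $0<\theta<3/L$; transferring it coordinatewise in an eigenbasis gives $e^{\theta\mtx{T}_k} \preceq \mtx{I} + \theta\mtx{T}_k + g(\theta)\mtx{T}_k^2$ in the Loewner order. Taking expectations and using $\E{\mtx{T}_k} = 0$ yields $\E{e^{\theta\mtx{T}_k}} \preceq \mtx{I} + g(\theta)\E{\mtx{T}_k^2}$, and the operator monotonicity of $\log$ together with $\log(1+u)\leq u$ gives $\log\E{e^{\theta\mtx{T}_k}} \preceq g(\theta)\E{\mtx{T}_k^2}$. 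Summing and using monotonicity of $\operatorname{tr}\exp$ in the Loewner order bounds the trace factor by $d\exp(g(\theta)\, v(\mtx{Z}))$.

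Combining everything gives $\mathbf{Prob}\{\|\mtx{Z}\|\geq t\} \leq (d_1+d_2)\exp(-\theta t + g(\theta) v(\mtx{Z}))$ for every $\theta\in(0,3/L)$, and the choice $\theta = t/(v(\mtx{Z})+Lt/3)$ makes the exponent equal $-t^2/(2(v(\mtx{Z})+Lt/3))$, yielding the claimed bound. The main obstacle is Lieb's concavity theorem itself, which is a deep result from matrix analysis; I would invoke it as a black box following Tropp's exposition, since an elementary proof lies outside the scope of these preliminaries.
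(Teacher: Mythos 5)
Your outline is correct and is exactly the argument behind the result as the paper uses it: the paper gives no proof of Proposition~\ref{prop:bernstein}, importing it verbatim as Theorem 6.1.1 of Tropp's monograph, whose proof is precisely the route you describe (Hermitian dilation, matrix Laplace transform, Lieb/Jensen subadditivity of cumulants, the MGF bound with $g(\theta)=\tfrac{\theta^2/2}{1-L\theta/3}$, and the choice $\theta = t/(v(\mtx{Z})+Lt/3)$). Nothing further is needed.
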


Another key theorem we rely on is Baranyai's theorem \cite{1975Baranyai}, stated below.

\begin{prop}[Baranyai, 1973]
Let $a_1, \dots, a_t$ be natural numbers such that $\sum_{j=1}^t a_j = {N\choose k}$.  Then the set of $k$-subsets of $[N]$ can be partitioned into disjoint families $S_1, \dots, S_t$ with $| S_j | = a_j$ and each $i \in [N]$ is included in exactly $\lceil{ \frac{a_j \cdot k }{N} \rceil}$ or $\lfloor{ \frac{a_j \cdot k}{N} \rfloor}$ elements of $S_j$.  
\end{prop}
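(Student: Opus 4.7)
The plan is to prove Baranyai's theorem in two stages: a matrix rounding step that establishes the desired per-element incidence counts, followed by a combinatorial realization step that produces an actual partition of the $k$-subsets. I would phrase the problem in terms of an $N \times t$ integer matrix $M = (M_{ij})$, where $M_{ij}$ records how many sets in family $S_j$ are to contain element $i$. The partition exists if and only if (i)~such an $M$ exists with row sums $\binom{N-1}{k-1}$, column sums $k a_j$, and each entry within $1$ of the fractional target $D_{ij} := a_j k / N$; and (ii)~any such $M$ is realizable as the incidence matrix of a valid set partition.

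For stage~(i), first I would observe that $D$ has integer row and column sums, using $\sum_j a_j = \binom{N}{k}$ together with the identity $k\binom{N}{k}/N = \binom{N-1}{k-1}$. The existence of an integer rounding $M$ with $M_{ij} \in \{\lfloor D_{ij} \rfloor, \lceil D_{ij} \rceil\}$ preserving both row and column sums then follows from a classical matrix rounding theorem, itself a consequence of the Birkhoff--von Neumann decomposition (equivalently, total unimodularity of the bipartite transportation polytope).

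For stage~(ii), I would proceed by induction on $t$. The base case $t=1$ forces $S_1 = \binom{[N]}{k}$, consistent with $a_1 = \binom{N}{k}$ and $M_{i,1} = \binom{N-1}{k-1}$. The inductive step rests on an auxiliary claim: given the first column $(M_{i,1})_i$ of $M$, there exists a family $S_1 \subseteq \binom{[N]}{k}$ with $|S_1| = a_1$ in which each element $i$ appears exactly $M_{i,1}$ times. This claim follows from Hall's marriage theorem (or, equivalently, max-flow/min-cut) applied to a bipartite graph whose left vertices are the $k a_1$ ``element slots'' and whose right vertices are the $a_1$ ``set slots''; the near-uniformity of the $M_{i,1}$'s (each within $1$ of $a_1 k / N$) is precisely what makes Hall's condition verifiable. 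Once $S_1$ is extracted, the residual problem on $\binom{[N]}{k} \setminus S_1$ with families $S_2, \ldots, S_t$ and truncated matrix $(M_{ij})_{j \geq 2}$ satisfies the inductive hypothesis, and the induction closes.

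The main obstacle will be verifying Hall's condition in the auxiliary claim and checking that the residual matrix remains valid after extracting $S_1$ (in particular, that its entries stay within $1$ of the appropriate fractional targets for the smaller residual ground set). Both points require care, and a complete proof is the content of Baranyai's original paper; for the present work I would invoke Baranyai's theorem as a black box and refer the reader to \cite{1975Baranyai}.
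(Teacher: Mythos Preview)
The paper does not prove this proposition at all; Baranyai's theorem is stated as a known preliminary result with a citation to \cite{1975Baranyai} and is invoked as a black box in the proof of Proposition~\ref{prop:2}. Your proposal ultimately lands in exactly the same place---deferring to the original reference---so in that sense it is consistent with the paper's treatment.

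That said, the sketch you offer before deferring has a structural gap worth naming. Your induction on $t$ does not close as written: once you extract $S_1$, the residual task is to partition $\binom{[N]}{k}\setminus S_1$ into $S_2,\dots,S_t$, but this is no longer the full collection of $k$-subsets of $[N]$, so the statement of Baranyai's theorem is not available as an inductive hypothesis. You would need to strengthen the hypothesis to cover arbitrary subfamilies with prescribed incidence profiles, at which point the argument becomes substantially harder. The standard proofs (notably the Brouwer--Schrijver argument) instead induct on $N$: an integral-flow or rounding step determines, for each family $S_j$, how many of its members should contain the element $N$; removing $N$ then reduces to an instance on $[N-1]$ involving both $k$- and $(k{-}1)$-subsets, and one carries a slightly more general statement through the induction. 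The matrix-rounding idea in your stage~(i) is genuinely the right ingredient, but in the correct proof it is applied \emph{inside} the induction on $N$ rather than once up front followed by an induction on $t$. Since both you and the paper ultimately cite the result rather than prove it, this does not affect correctness here, but the proof outline as given would not go through.
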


\subsection{Sketch of the proof of Theorem \ref{thm:main}}

Suppose that $\X_k$, $\X$, and $\Z_n$ satisfy the assumptions of Theorem \ref{thm:main}.  
Write
\begin{align}
\Z_n &= \sum_{k=0}^n \left(\frac{1}{n}\right)^k \sum_{1 \leq j_1 < \cdots < j_k \leq n} \X_{j_k}\X_{j_{k-1}} \cdots \X_{j_1}  \\
&= \I_d  + \sum_{k=1}^n \Z_{n,k}  
\end{align}
where 
\begin{equation}
\label{eq:Znk}
\Z_{n,k} = \left(\frac{1}{n}\right)^k \sum_{1 \leq j_1 < \cdots < j_k \leq n} \X_{j_k}\X_{j_{k-1}} \cdots \X_{j_1}, \quad \quad 1 \leq k \leq n.
\end{equation}
Because the $\X_k$ are independent, the expected values of $\Z_{n,k}$ and $\Z_n$ are easily calculated:
\begin{equation}
\label{eq:exp}
\E{\Z_{n,k}} = \left(\frac{1}{n} \right)^k\binom{n}{k}\X^k, \quad \quad \E{\Z_n} = \I_d + \sum_{k=1}^n \E \Z_{n,k} = (\I_d + \frac{1}{n}\X)^n; \quad \quad  .
\end{equation}
We then write
\begin{align}
\label{eq:err}
\norm{\Z_n - e^{\X}} &\leq \norm{\Z_n - \mathbb{E}[\Z_n] } + \norm{ \mathbb{E}[\Z_n] - e^{\X}} \nonumber \\
&= \norm{\Z_n - \mathbb{E}[\Z_n] } + \norm{ (\I_d + \frac{1}{n}\X)^n- e^{\X}}  \\
&\leq \sum_{k=1}^n \norm{\Z_{n,k} - \mathbb{E}[\Z_{n,k}]} +  \norm{ (\I_d + \frac{1}{n}\X)^n- e^{\X}} 
\end{align}
The approximation error $ \norm{(\I_d + \frac{1}{n}\X)^n- e^{\X}}$ is bounded deterministically using standard analysis, and converges to zero at rate $O(1/n)$, as made precise by Lemma \ref{prop:4}.   The errors $\norm{\Z_{n,k} - \mathbb{E}[\Z_{n,k}]}$ decay sufficiently quickly in $k$ that the sum of all but the first $\log(n)$ many of them, $\sum_{k = \lceil{\log(n) \rceil}}^n \| \Z_{n,k} - \mathbb{E}[\Z_{n,k}]  \|$, is also bounded by $O(1/n)$ deterministically (Lemma \ref{prop:1} below).   The leading error term $\| \Z_{n,1} - \mathbb{E}[\Z_{n,1}]  \|$  is bounded with high probability using the Matrix Bernstein inequality.   The most interesting, and most difficult, part of the proof is in bounding the intermediate terms $\| \Z_{n,k} - \mathbb{E}[\Z_{n,k}] \|$, $k = 2, \dots, \lfloor{ \log(n) \rfloor}.$  To do this, we appeal to Baranyai's theorem, which implies that each such term can be approximately written as a sum of \emph{sums of independent matrix products}, so that we may apply the matrix Bernstein inequality with properly tuned parameters to each sub-sum to achieve the final bound.   

\section{Key Ingredients}

The first two lemmas use standard analysis tools;  we defer the proofs to appendices.  

\begin{restatable}{lemma}{eXbound}
\label{prop:4}
Let $\X$ be a square real or complex-matrix with spectral norm $\| \X \|$. The following holds:
$$
 \| (I + \frac{1}{n}\X)^n - e^{\X} \|  \leq \frac{\| \X \| ^2}{2n} e^{\| \X \|}
$$
\end{restatable}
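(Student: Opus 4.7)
The plan is to expand both $(\I + \X/n)^n$ and $e^{\X}$ into their commutative power series in $\X$ and compare coefficient by coefficient. By the binomial theorem, $(\I+\X/n)^n = \sum_{k=0}^n \binom{n}{k} n^{-k} \X^k = \sum_{k=0}^n \frac{\X^k}{k!}\prod_{j=0}^{k-1}(1-j/n)$, while $e^{\X} = \sum_{k=0}^{\infty} \X^k/k!$. Subtracting term by term and extending the first sum trivially to indices $k > n$ (in that range the product already vanishes since it contains the factor $1 - n/n = 0$), I would rewrite the difference as $e^{\X} - (\I+\X/n)^n = \sum_{k=0}^{\infty} \frac{c_k}{k!}\X^k$, where $c_k := 1 - \prod_{j=0}^{k-1}(1-j/n)$; in particular $c_0 = c_1 = 0$.

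The core estimate I would then establish is the coefficient bound $0 \leq c_k \leq k(k-1)/(2n)$ for every $k \geq 2$. For $k \leq n$ this follows from the elementary inequality $\prod_{j=1}^{m}(1-a_j) \geq 1 - \sum_{j=1}^{m} a_j$ valid for $a_j \in [0,1]$ (a one-line induction on $m$), applied with $a_j = j/n$, which gives $c_k \leq \sum_{j=0}^{k-1} j/n = k(k-1)/(2n)$. For $k > n$ we have $c_k = 1$, but in that regime $k(k-1)/(2n) \geq (n+1)n/(2n) \geq 1$ (using $n \geq 1$), so the claimed bound holds uniformly in $k$.

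Armed with this coefficient bound, the triangle inequality together with the submultiplicativity of the spectral norm would yield
\begin{align*}
\norm{(\I+\X/n)^n - e^{\X}} \leq \sum_{k=2}^{\infty} \frac{\norm{\X}^k}{k!} \cdot \frac{k(k-1)}{2n} = \frac{\norm{\X}^2}{2n} \sum_{k=2}^{\infty} \frac{\norm{\X}^{k-2}}{(k-2)!} = \frac{\norm{\X}^2}{2n}\, e^{\norm{\X}},
\end{align*}
which is exactly the statement of the lemma. The argument is entirely elementary, and I do not expect any real obstacle; the only point requiring a small amount of care is checking the coefficient inequality uniformly in the two regimes $k \leq n$ and $k > n$, and both cases are handled above.
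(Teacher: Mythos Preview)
Your proof is correct. Both you and the paper begin identically: expand $(\I+\X/n)^n$ and $e^{\X}$ as power series in $\X$, take norms term by term, and observe that the coefficients $c_k = 1 - \prod_{j=0}^{k-1}(1-j/n)$ are nonnegative. From that point the arguments diverge. The paper notices that the resulting scalar sum is exactly $e^{\sigma} - (1+\sigma/n)^n$ with $\sigma = \|\X\|$, and then bounds this scalar quantity via the inequalities $\log(1+x) \geq x - x^2/2$ and $1 - e^{-x} \leq x$. You instead bound each coefficient individually by $c_k \leq k(k-1)/(2n)$ using the Weierstrass-type inequality $\prod(1-a_j) \geq 1 - \sum a_j$, and then resum to obtain the closed form $\frac{\|\X\|^2}{2n} e^{\|\X\|}$ exactly. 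Your route is arguably more direct---it avoids passing through the scalar identity and the logarithm bound, and the telescoping $\sum_{k\geq 2}\|\X\|^k k(k-1)/k! = \|\X\|^2 e^{\|\X\|}$ lands on the constant without any slack---while the paper's route has the appeal of reducing the matrix statement to its scalar analogue in one stroke. Both are short and elementary; neither has an advantage in generality.
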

\noindent The proof of Lemma \ref{prop:4} is found in Appendix \ref{app:3}.

\begin{restatable}{lemma}{tailbound}
\label{prop:1}
Suppose that $\Z_n$ is as in Theorem \ref{thm:main}, and let $\Z_{n,k}$ be as defined in \ref{eq:Znk}. Suppose that $\lceil \log(n) \rceil \geq \max\{3, \lceil{ L e^2 \rceil} \}.$ Then
$$
\sum_{k= \lceil \log(n) \rceil }^n \norm{\Z_{n,k} - \mathbb{E}\left(\Z_{n,k} \right)} \leq \frac{2Le^2}{n(e-1)}
$$
\end{restatable}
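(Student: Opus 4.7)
The plan is to bound each summand $\|\Z_{n,k} - \E{\Z_{n,k}}\|$ deterministically with a crude triangle inequality and then exploit the factorial growth in the denominator to produce a geometric tail summing to $O(1/n)$.

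First, I would apply the triangle inequality directly to the defining sum \eqref{eq:Znk}. Since $\|\X_j\| \leq L$ for every $j$, this yields $\|\Z_{n,k}\| \leq \binom{n}{k}(L/n)^k \leq L^k/k!$, and the identical bound holds for $\|\E{\Z_{n,k}}\|$ because $\|\X\| \leq L$. Consequently $\|\Z_{n,k} - \E{\Z_{n,k}}\| \leq 2L^k/k!$, and the task reduces to showing $\sum_{k=k_0}^{n} L^k/k! = O(1/n)$, where $k_0 := \lceil \log(n) \rceil$.

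The second step is to exploit the hypothesis $k_0 \geq \lceil Le^2 \rceil$ to obtain geometric decay. The consecutive ratios of the sequence $(L^k/k!)_{k \geq k_0}$ are $L/(k+1) \leq L/(k_0+1) \leq 1/e$, so the tail is dominated by a geometric series: $\sum_{k=k_0}^{\infty} L^k/k! \leq (L^{k_0}/k_0!) \cdot e/(e-1)$. To control the leading term, I would invoke Stirling's inequality $k_0! \geq (k_0/e)^{k_0}$ to get $L^{k_0}/k_0! \leq (Le/k_0)^{k_0}$; the bound $k_0 \geq Le^2$ then forces $Le/k_0 \leq 1/e$, and so $L^{k_0}/k_0! \leq e^{-k_0} \leq e^{-\log n} = 1/n$.

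Combining the three ingredients delivers the desired $O(1/n)$ bound with an explicit constant; matching the precise prefactor $2Le^2/(n(e-1))$ in the statement is a matter of careful bookkeeping, for instance by pulling out one factor of $L/k_0$ before applying Stirling to $(k_0-1)!$. The main obstacle is purely computational rather than conceptual: one must verify that the geometric ratio bound holds uniformly for $k \geq k_0$ under the given hypothesis, and track the Stirling constants cleanly. All three tools --- triangle inequality, geometric series, and Stirling's estimate --- are standard and introduce no surprises.
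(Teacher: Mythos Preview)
Your proposal is correct and follows essentially the same route as the paper: bound each term by the triangle inequality, then sum a geometrically decaying tail whose first term is $O(1/n)$. The only cosmetic difference is that the paper passes directly from $\binom{n}{k}(L/n)^k$ to $(Le/k)^k$ via $\binom{n}{k}\le (en/k)^k$, whereas you go through $L^k/k!$ and then Stirling; your simpler leading-term estimate yields the constant $2e/(n(e-1))$, which implies the stated $2Le^2/(n(e-1))$ only when $Le\ge 1$, and the paper recovers the extra factor $Le$ for small $L$ by a brief case split on $L\lessgtr 1/e$ rather than the ``pull out $L/k_0$'' trick you suggest.
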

\noindent The proof of Lemma \ref{prop:1} is found in Appendix \ref{app:1}.

Proposition \ref{prop:2} contains the meat of the proof.  By carefully combining the Matrix Bernstein inequality and Baranyai's theorem,  
we produce high probability bounds for the error terms $ \| \Z_{n,k}  - \mathbb{E}\left(\Z_{n,k} \right) \|$.

\begin{restatable}{prop}{probbound}
\label{prop:2}
Assume $\X_1, \X_2, \ldots \X_n$ are $d \times d$ matrices satisfying the assumptions in Theorem \ref{thm:main}, and suppose that $n, k, d \in \mathbb{Z},$ and $\delta > 0$ are such that 
\begin{align}
\label{k:simplified}
k &\leq \left( \frac{16 n}{\log(d/\delta) + \log(ne)} \right)^{1/3}
\end{align}
where, for the $k=1$ case, we treat $0^0 = 1$. 
Then
 $${\bf Prob}[ \norm{\Z_{n,k} - \mathbb{E}(\Z_{n,k})} > \gamma_k] \leq \delta^k$$
 where 
 \begin{equation}
\label{gamma_k}
\gamma_k = 2\left( \frac{e L}{k-1} \right)^{k-1}  \left( \frac{2L}{\sqrt{n}} \sqrt{ \log\left(  \frac{2d (ne / (k-1))^{k-1}}{ \delta} \right) }+  \frac{L(k-1)}{n} \right)
\end{equation}
\end{restatable}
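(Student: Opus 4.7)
The strategy is to decompose the sum defining $\Z_{n,k}$ into a small number of sub-sums, each of which is itself a sum of \emph{mutually independent} matrix products, then apply the matrix Bernstein inequality (Proposition \ref{prop:bernstein}) to each sub-sum before combining via the triangle inequality and a union bound. The case $k=1$ is immediate: $\Z_{n,1} - \mathbb{E}\Z_{n,1} = \frac{1}{n}\sum_{j=1}^n(\X_j - \X)$ is a sum of $n$ independent, centered matrices each of spectral norm at most $2L$, with matrix variance statistic bounded by $4nL^2$, and a direct invocation of Proposition \ref{prop:bernstein} yields the claimed $\gamma_1$ (with probability $\delta = \delta^1$).

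For $k\ge 2$, the $\binom{n}{k}$ products $\X_{j_k}\cdots\X_{j_1}$ in $\Z_{n,k}$ are not jointly independent, because distinct $k$-subsets may overlap. To handle this I invoke Baranyai's theorem with the $a_j$ chosen so that $a_j k/n \leq 1$: this partitions the family of all $k$-subsets of $[n]$ into $M$ classes $\mathcal{P}_1,\dots,\mathcal{P}_M$ such that within each $\mathcal{P}_\ell$ every index $i \in [n]$ appears in at most one element, i.e., the $k$-subsets inside a single class are pairwise disjoint. A standard count gives $M \leq \binom{n-1}{k-1} \leq (en/(k-1))^{k-1}$ and $|\mathcal{P}_\ell| \leq \lceil n/k\rceil$. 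Setting
$$W_\ell = \sum_{\{j_1<\cdots<j_k\}\in\mathcal{P}_\ell} \X_{j_k}\X_{j_{k-1}}\cdots\X_{j_1},$$
the pairwise disjointness within $\mathcal{P}_\ell$, together with the independence of the $\X_j$, implies that the summands of $W_\ell$ are mutually independent. Each centered summand has spectral norm at most $2L^k$ (using $\|\X^k\| \leq L^k$), and the matrix variance obeys $v(W_\ell - \mathbb{E}W_\ell) \leq 4L^{2k}|\mathcal{P}_\ell| \leq 4L^{2k}n/k$.

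Applying Proposition \ref{prop:bernstein} to each $W_\ell - \mathbb{E}W_\ell$ with per-class target failure probability $\delta^k/M$, then unioning over $\ell \in [M]$, yields with probability at least $1-\delta^k$ a simultaneous bound on every $\|W_\ell - \mathbb{E}W_\ell\|$ of order $L^k\sqrt{(n/k)\log(2dM/\delta^k)} + L^k\log(2dM/\delta^k)$. The triangle inequality then gives $\|\Z_{n,k} - \mathbb{E}\Z_{n,k}\| \leq (M/n^k)\max_\ell \|W_\ell - \mathbb{E}W_\ell\|$, and substituting $M \leq (en/(k-1))^{k-1}$ produces the prefactor $(eL/(k-1))^{k-1}$ appearing in $\gamma_k$. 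The main obstacle is the final constant-tracking step: one must verify that the leading factor of $2$ in $\gamma_k$, together with the $e^{k-1}$ in the prefactor, absorbs both the $\sqrt{2/k}$ coming from the variance bound and the discrepancy between $\log(1/\delta^k)$ and $\log(1/\delta)$ appearing under the square root (which collapses because $\log(2dM/\delta^k) \leq 2k\log(2d(ne/(k-1))^{k-1}/\delta)$). The constraint \eqref{k:simplified} is precisely what is required to control the second-order Bernstein term and merge it into the simpler $L(k-1)/n$ appearing in the stated $\gamma_k$.
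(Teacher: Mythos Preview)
Your plan is the paper's plan: Baranyai partition, matrix Bernstein on each class, union bound over classes. Two places where your execution diverges from the paper's proof deserve comment.

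First, the paper does not apply Baranyai directly to $[n]$. It first passes to $[n-p]$, where $p\in\{0,\dots,k-1\}$ is chosen so that $k\mid (n-p)$, and bounds the discarded terms (those $k$-subsets meeting $\{n-p+1,\dots,n\}$) deterministically as a remainder $\mtx{D}_k$ with $\|\mtx{D}_k\|\le 2(eL/(k-1))^{k-1}\,L(k-1)/n$. On $[n-p]$ Baranyai then yields exactly $m_k=\binom{n-p}{k-1}$ classes, each a \emph{perfect} partition of $[n-p]$ into $(n-p)/k$ blocks. Your direct route on $[n]$ forces $a_j\le\lfloor n/k\rfloor$, and then the number of classes obeys $M\ge\binom{n}{k}/\lfloor n/k\rfloor\ge\binom{n-1}{k-1}$, with equality only when $k\mid n$; your claimed bound $M\le\binom{n-1}{k-1}$ points the wrong way when $k\nmid n$. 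The damage is only a bounded multiplicative constant, which the slack in $\binom{n-1}{k-1}\le(ne/(k-1))^{k-1}$ can mostly absorb, but the clean constant $2$ in $\gamma_k$ is precisely what the $n-p$ reduction buys.

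Second, your attribution of the term $L(k-1)/n$ in $\gamma_k$ to the second-order Bernstein contribution is not what happens in the paper: that term is exactly the bound on $\|\mtx{D}_k\|$. The role of the constraint \eqref{k:simplified} is to place each Bernstein application in the variance-dominated regime (so that $L_{\mathrm{Bern}}\,\tau\le v(\cdot)$), after which Bernstein contributes only the $\sqrt{\log}$ term $\beta_k$; the additive $L(k-1)/n$ enters separately from the remainder $\mtx{D}_k$. In your route without the reduction there is no such remainder, and under the same constraint the second-order Bernstein term would be absorbed into the leading $\sqrt{\log}$ term rather than produce $L(k-1)/n$; you would then prove a bound at least as strong as the stated $\gamma_k$, but not by the mechanism you describe.
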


\begin{proof}

For simplicity of notation, we drop the subscript $n$ in all matrix notation throughout; that is, we let $\Z = \Z_n$, we let $\Z_{n,k} = \Z_k$, and so on. 
Note 
Let $p \in \{0,1, \ldots, k-1\}$ be the unique integer such that $k$ divides $n-p$, and write
\begin{align}
\Z_{k} - \E{\Z_{k}} &= \left(\frac{1}{n} \right)^k \sum_{1 \leq j_1 < \cdots < j_k \leq n} \left( \X_{j_k}\X_{j_{k-1}} \cdots \X_{j_1} - \X^k\right) \\
&=  \left(\frac{1}{n} \right)^k \sum_{1 \leq j_1 < \cdots < j_k \leq n-p} \left( \X_{j_k}\X_{j_{k-1}} \cdots \X_{j_1} - \X^k\right) + \mtx{D}_k.
\end{align}
The random matrix $\mtx{D}_k$ is a sum of $\binom{n}{k} - \binom{n-p}{k}$ random matrix products, each of which contains at least one of the $p$ matrices $\X_{n-p+1}, \cdots \X_n$.  Each term is bounded in norm deterministically by $2\left(\frac{L}{n}\right)^k$, so
\begin{align*}
\| \mtx{D}_k \| &\leq \left(\binom{n}{k} - \binom{n-p}{k} \right)2\left(\frac{L}{n}\right)^k\\
&\leq  2(k-1)\binom{n-1}{k-1}\left(\frac{L}{n}\right)^k \quad \quad \quad \text{(by Pascal's rule)} \\
&\leq 2(k-1)\left(\frac{(n-1)(e)}{k-1}\right)^{k-1}\left(\frac{L}{n}\right)^k\\
&\leq 2 \frac{L(k-1)}{n} \left(\frac{e L}{k-1} \right)^{k-1}\\
\end{align*}

We thus have so far that
\begin{equation}
\norm{\Z_k - \E{\Z_k}} \leq \norm{  \left(\frac{1}{n} \right)^k \sum_{1 \leq j_1 < \cdots < j_k \leq n-p} \left( \X_{j_k}\X_{j_{k-1}} \cdots \X_{j_1} - \X^k\right) } +  \frac{2L(k-1)}{n} \left(\frac{e L}{k-1} \right)^{k-1}. \nonumber
\end{equation}
Now, as a consequence of Baranyai's theorem, there exist $m_k = \frac{{n-p \choose k}}{(n-p)/k} = \binom{n-p}{k-1}$ partitions of $[n-p] = \{1,2, \dots, n-p\}$, denoted by ${\cal P}_r$, $r= 1,2,\dots, m_k$,  such that 
\begin{align}
\sum_{1 \leq j_1 < \cdots < j_k \leq n-p} \left( \X_{j_k}\X_{j_{k-1}} \cdots \X_{j_1} - \X^k\right) &= \sum_{r=1}^{m_k} \sum_{ \substack{ 1 \leq j_1 < \cdots < j_k \leq n-p, \\  \{j_1, \dots, j_k \} \in {\cal P}_r}}  \left(\X_{j_k}\cdots \X_{j_1} - \X^k\right) 
\end{align}
Write 
$$
\sum_{\ell=1}^{(n-p)/k}  \Y_{r,\ell} =    \sum_{ \substack{1 \leq j_1 < \cdots < j_k \leq n-p, \\  \{j_1, \dots, j_k \} \in {\cal P}_r}} \left(\frac{1}{n}\right)^k \left(\X_{j_k}\cdots \X_{j_1} - \X^k\right).
$$
Because the $\X_j$ are independent and because each ${\cal P}_r$ constitutes a partition of $[n-p]$, each subset of random matrices $\{ \Y_{r,\ell} \}_{\ell=1}^{(n-p)/k}$ forms a mutually independent set of random matrices.  We can use this to bound $\norm{\sum_{r=1}^{m_k} \sum_{\ell=1}^{(n-p)/k} \Y_{r, \ell}}$ with high probability, using the Matrix Bernstein Inequality (Proposition \ref{prop:bernstein}).  Indeed, we will apply the Matrix Bernstein Inequality separately to each sum $\sum_{\ell=1}^{(n-p)/k} \Y_{r,\ell}$ of independent random matrices.  To do this, we employ the bounds
\begin{enumerate}
    \item $\E{\Y_{r,\ell}} = \left(\frac{1}{n}\right)^k\E{\X_{j_k}\cdots \X_{j_1} - \X^k}  = \left(\frac{1}{n}\right)^k\left(\X^k - \X^k\right) =0$
    \item $\| \Y_{r,\ell} \|  = \norm{\left(\frac{1}{n}\right)^k\left(\X_{j_k}\cdots \X_{j_1} - \X^k\right)} \leq \left(\frac{1}{n}\right)^k\left(\norm{\X_{j_k}}\cdots \norm{\X_{j_1}} + \norm{X}^k\right) \leq  2\left(\frac{L}{n}\right)^k$
    \item 
    \begin{align*}
    \norm{\sum_{\ell=1}^{(n-p)/k}\E{\Y_{r,\ell} {\Y_{r,\ell}}^*}} &\leq \sum_{\ell=1}^{(n-p)/k}\norm{\E{\Y_{r,\ell} {\Y_{r,\ell}}^*}} \leq \sum_{\ell=1}^{(n-p)/k}\E{\norm{\Y_{r,\ell} {\Y_{r,\ell}}^*}} \leq \sum_{\ell=1}^{(n-p)/k}\E{\norm{\Y_{r,\ell}}^2}   \\
    &\leq \sum_{\ell=1}^{(n-p)/k}4\left(\frac{L}{n}\right)^{2k}  \leq 4 \left(\frac{n}{k}\right) \left(\frac{L}{n}\right)^{2k} \nonumber
    \end{align*}
    \item Similarly, $\norm{\sum_{\ell=1}^{(n-p)/k}\E{{\Y_{r,\ell}}^* \Y_{r,\ell}}} \leq 4\left(\frac{n}{k}\right)\left(\frac{L}{n}\right)^{2k} $
\end{enumerate}

We can now apply the Matrix Bernstein Inequality: for any $\tau > 0$,
\begin{align*}
{\bf Prob} \left( \norm{  \sum_{\ell=1}^{\frac{(n-p)}{k}} \Y_{r,\ell}} \geq \tau \right) &\leq 2d \exp{\left(\frac{-\tau^2/2}{4\frac{n}{k}\left(\frac{L}{n}\right)^{2k}  + 2\tau/3\left(\frac{L}{n}\right)^{k}} \right)}
\end{align*}
We take the union bound over all $m_k = \binom{n-p}{k-1} \leq  \binom{n}{k-1} \leq (\frac{ne}{k-1})^{k-1}$ sums to obtain
$${\bf Prob} \left(\exists r \in [m_k] : \norm{  \sum_{\ell=1}^{\frac{(n-p)}{k}} \Y_{r,\ell}} \geq \tau \right) \leq 2d  \left( \frac{ne}{k-1} \right)^{k-1} \exp{\left(\frac{-\tau^2/2}{4\frac{n}{k}\left(\frac{L}{n}\right)^{2k}  + \tau\left(\frac{L}{n}\right)^{k}} \right)}$$

Set $\tau = \beta_k  (\frac{ne}{k-1})^{-(k-1)}$ (where, in case $k=1$, we use $0^0 = 1$).
Then
\begin{align*}
{\bf Prob} \left(\sum_{r=1}^{m_k}\norm{  \sum_{\ell=1}^{\lfloor n/k \rfloor} \Y_{r,\ell}}  \geq \beta_k \right) &\leq {\bf Prob} \left(\exists r \in [m_k] : \norm{  \sum_{\ell=1}^{\lfloor n/k \rfloor} \Y_{r,\ell}}  \geq \beta_k/m_k \right) \\
&\leq  {\bf Prob} \left(\exists r \in [m_k] : \norm{  \sum_{\ell=1}^{\lfloor n/k \rfloor} \Y_{r,\ell}}  \geq \beta_k \left( \frac{ne}{k-1} \right)^{-(k-1)} \right) \\
 &\leq 2d \left( \frac{ne}{k-1} \right)^{k-1} \exp{\left(\frac{-\beta_k^2 (\frac{ne}{k-1})^{-2(k-1)} /2}{4\frac{n}{k}\left(\frac{L}{n}\right)^{2k}  + \beta_k (\frac{ne}{k-1})^{-(k-1)}\left(\frac{L}{n}\right)^{k}} \right)}\\
  &= 2d \left( \frac{ne}{k-1} \right)^{k-1} \exp{\left(\frac{-\beta_k^2 n (\frac{k-1}{e})^{2(k-1)} /2}{4L^{2k}/k  + \beta_k (\frac{k-1}{e})^{(k-1)} L^{k}} \right)}
\end{align*}
Set 
\begin{equation}
\label{beta}
\beta_k = \frac{4}{\sqrt{n}}(\frac{e}{k-1})^{k-1} L^k \sqrt{\log(2d) + (k-1) \log( ne / (k-1)) - \log(\delta)}
\end{equation}
Under the assumption that 
\begin{equation}
\label{assume:k}
k \leq \frac{4\sqrt{n}}{\sqrt{\log(d) + (k-1) \log( ne / (k-1)) - \log(\delta)}},
\end{equation}
which is implied by the stated condition \eqref{k:simplified} on $k$, 
it follows that  
$$\beta_k (\frac{k-1}{e})^{k-1} L^{k} \leq 4L^{2k}/k, $$ and so we can continue to bound  
\begin{align*}
{\bf Prob} \left(\sum_{r=1}^{m_k}\norm{  \sum_{\ell=1}^{\lfloor n/k \rfloor} \Y_{r,\ell}}  \geq \beta_k \right) &\leq 
2d \left( \frac{ne}{k-1} \right)^{k-1} \exp{\left(\frac{-\beta_k^2 n (\frac{k-1}{e})^{2(k-1)} /2}{8L^{2k}/k} \right)} \nonumber \\
&\leq 2d \left( \frac{ne}{k-1} \right)^{k-1} \exp{\left( -k \left(\log(d) + (k-1) \log( ne / (k-1)) - \log(\delta) \right) \right)} \nonumber \\
  &\leq \delta^k
\end{align*}
Thus, we conclude that for each $k=1,2,\dots, $ satisfying assumption \eqref{assume:k}, it holds that 

$${\bf Prob}[ \| \Z_{k} - \E{\Z_{k}}  \| > \beta_k + \| \mtx{D}_k \|] \leq \delta^k$$
Recalling $$\| \mtx{D}_k \| \leq  \frac{2L(k-1)}{n}\left(\frac{eL}{k-1} \right)^{k-1}$$ yields the result.
\end{proof}

\section{Proof of Theorem \ref{thm:main}}

We can bound the error $ \| \Z_n - \mathbb{E}[\Z_n] \| $ from Theorem \ref{thm:main} by combining Proposition \ref{prop:2} with Lemma \ref{prop:1}.
\begin{corollary}
\label{prop:3}
Suppose that $L$, $n,$ and $\delta \in (0, 1/2]$ are such that
\begin{equation}
\label{eq:restriction}
\max\{3, \lceil{ L e^2 \rceil} \} \leq  \log(n) +1 \leq \left( \frac{16 n}{\log(d/\delta) + \log(ne)} \right)^{1/3}
\end{equation}
Then with probability exceeding $1 -2\delta$, 
\begin{align*}
 \| \Z_n - \mathbb{E}[\Z_n] \| &\leq  \frac{2L  e^L \log(n)}{\sqrt{n}} \left( 2 \sqrt{ \log( 2d (ne)^{n} / \delta) }+  \frac{\log(n)}{\sqrt{n}} \right) \end{align*}
\end{corollary}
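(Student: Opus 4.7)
The plan is to use the decomposition $\Z_n - \mathbb{E}[\Z_n] = \sum_{k=1}^n (\Z_{n,k} - \mathbb{E}[\Z_{n,k}])$ from the sketch in Section~3, and split the sum at $M := \lceil \log(n) \rceil$. The head $k = 1, \dots, M-1$ is controlled probabilistically using Proposition~\ref{prop:2}; the tail $k = M, \dots, n$ is controlled deterministically using Lemma~\ref{prop:1}. Both invocations are justified directly by \eqref{eq:restriction}: the left half $\max\{3, \lceil Le^2 \rceil\} \leq \log(n)+1$ furnishes the hypothesis of Lemma~\ref{prop:1}, and the right half $\log(n)+1 \leq (16n/(\log(d/\delta)+\log(ne)))^{1/3}$ ensures that every $k \leq M-1$ satisfies condition \eqref{k:simplified} of Proposition~\ref{prop:2}.

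For the head, Proposition~\ref{prop:2} gives $\|\Z_{n,k} - \mathbb{E}[\Z_{n,k}]\| \leq \gamma_k$ with probability at least $1 - \delta^k$. A union bound over the at most $\log(n)$ head indices yields failure probability at most $\sum_{k \geq 1} \delta^k = \delta/(1-\delta) \leq 2\delta$, matching the stated probability since $\delta \leq 1/2$. On the complementary event, the triangle inequality combined with Lemma~\ref{prop:1} gives
\[
\|\Z_n - \mathbb{E}[\Z_n]\| \;\leq\; \sum_{k=1}^{M-1}\gamma_k \;+\; \frac{2Le^2}{n(e-1)}.
\]

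It remains to bound $\sum_{k=1}^{M-1}\gamma_k$ by the target expression. Two worst-case estimates are needed. First, a short calculus argument (differentiating $x\log(eL/x)$ in $x$) shows that $(eL/(k-1))^{k-1}$, interpreted via $0^0 = 1$ at $k=1$, attains its maximum at $k-1 = L$, where it equals $e^L$; hence this prefactor is bounded by $e^L$ uniformly in $k$. Second, $(k-1)\log(ne/(k-1)) \leq n\log(ne)$, so the argument of the square root is bounded by $\log(2d(ne)^n/\delta)$. Together with $k-1 \leq \log(n)$, this gives
\[
\gamma_k \;\leq\; 2e^L\left(\tfrac{2L}{\sqrt{n}}\sqrt{\log(2d(ne)^n/\delta)} + \tfrac{L\log(n)}{n}\right)
\]
for each head index $k$; summing over the $\log(n)$ such indices and absorbing the $O(L/n)$ deterministic tail into the resulting $O(Le^L(\log n)^2/n)$ term recovers the stated inequality.

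Given that Lemma~\ref{prop:1} and Proposition~\ref{prop:2} already perform the analytical heavy lifting, the main obstacle is the bookkeeping in the third step: the prefactor $(eL/(k-1))^{k-1}$ could naively be inflated if one used loose bounds like $(eL)^{k-1}$, so identifying the uniform bound $e^L$ via the calculus argument above is essential to obtain a final rate with the correct exponential dependence on $L$ rather than something worse.
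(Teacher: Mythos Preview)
Your proposal is correct and follows essentially the same approach as the paper: split $\sum_k \|\Z_{n,k}-\mathbb{E}[\Z_{n,k}]\|$ at $\lceil\log n\rceil$, handle the tail deterministically via Lemma~\ref{prop:1}, handle the head via Proposition~\ref{prop:2} with a union bound $\sum_{k\ge 1}\delta^k\le 2\delta$, and finish by uniformly bounding $(eL/(k-1))^{k-1}\le e^L$ together with $(ne/(k-1))^{k-1}\le (ne)^n$. The only cosmetic difference is that you run the head over $k\le M-1$ while the paper runs it over $k\le \lceil\log n\rceil$ (overlapping the tail by one index); both are covered by the right-hand inequality in \eqref{eq:restriction}, and your explicit remark about absorbing the $O(L/n)$ tail into the $O(Le^L(\log n)^2/n)$ term is in fact slightly more careful than the paper, which silently drops it.
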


\begin{proof}
First, $\| \Z_n - \mathbb{E}[\Z_n] \| \leq \sum_{k=1}^{n} \norm{\Z_{n,k} - \E{\Z_{n,k}}}$ by the triangle inequality. 
By Proposition \ref{prop:1}, 
$$
\sum_{k=\lceil \log(n) \rceil}^{n} \norm{\Z_{n,k} - \E{\Z_{n,k}}} \leq \frac{2eL}{n} \quad \quad \text{with probability 1}.
$$  
Now, given \eqref{eq:restriction}, we can apply Proposition \ref{prop:2} to each of $k=1, 2, \dots, \lceil \log(n) \rceil$, and via the union bound, we obtain that the following holds with probability at least $1 -  \sum_{k=1}^{\lceil \log(n) \rceil} \delta^{-k} \geq 1 - \frac{\delta}{1-\delta} \geq 1 - 2\delta:$
 \begin{align*} \sum_{k=1}^{\lceil \log(n) \rceil} \norm{\Z_{n,k} - \mathbb{E}(\Z_{n,k})} &\leq \sum_{k=1}^{\lceil \log(n) \rceil} \gamma_k  \\
 &\leq 2 \sum_{k=1}^{\lceil \log(n) \rceil} \left( \frac{e L}{k-1} \right)^{k-1}  \left( \frac{2L}{\sqrt{n}} \sqrt{ \log( 2d (ne / (k-1))^{k-1} / \delta) }+  \frac{L(k-1)}{n} \right)
 \\
  &\leq 2L \left( \frac{2}{\sqrt{n}} \sqrt{ \log( 2d (ne)^{n} / \delta) }+  \frac{\log(n)}{n} \right) \sum_{k=1}^{\lceil \log(n) \rceil} \left( \frac{e L}{k-1} \right)^{k-1} \nonumber \\
  &\leq \frac{2L  e^L \log(n)}{\sqrt{n}} \left( 2 \sqrt{ \log( 2d (ne)^{n} / \delta) }+  \frac{\log(n)}{\sqrt{n}} \right) 
   \end{align*}
where in the final inequality, we use that $\left( \frac{e L}{x} \right)^{x} $ is maximized over $x > 0$ at $x^{*} = L$. 
We have the stated result. 

\end{proof}

\noindent {\emph{Proof of Theorem \ref{thm:main}} from Corollary \ref{prop:3}}. Write $\| \Z_n  - e^{\X} \| \leq \| \Z_n - \mathbb{E}[\Z_n] \| + \| \mathbb{E}[\Z_n] - e^{\X} \|$.  Bound $\| \Z_n - \mathbb{E}[\Z_n] \| $ using Corollary \ref{prop:3} and bound $\| \mathbb{E}[\Z_n] - e^{\X} \| =  \| (I + \frac{1}{n}\X)^n - e^{\X} \|$ using Lemma \ref{prop:4} to arrive at the statement of Theorem \ref{thm:main}.

\section{Conclusion and Future Directions}
We derived a large deviations bound for the convergence rate of a certain type of product of random matrices toward its limiting distribution. Our results are quite general and nearly sharp with respect to dependence on the matrix size $d$ and number of terms in the product, $n$. 

One particularly immediate application of our rates of convergence is in the analysis of random matrix products arising in stochastic iterative algorithms such as Oja's algorithm for streaming principal component analysis \cite{streaming2}.  
One area of future work would be to use our results to derive convergence rates for Oja's method using minimal assumptions -- an area of ongoing research (see, for example, \cite{2016Zhu, 2016Jain}).
This is particularly important because of the fundamental role streaming PCA plays in high-dimensional data analysis.

\bibliographystyle{alpha}
\bibliography{RandMatrix}

\appendix

{\section{Proof of \cref{prop:1}} \label{app:1}}

\tailbound*

\begin{proof}
We have that 
\begin{align*}
\norm{\Z_{n,k} - \E{\Z_{n,k}}} &=\frac{1}{n^k}  \norm{\sum_{1 \leq j_1 < \cdots < j_k \leq n} \left( \X_{j_k}\X_{j_{k-1}} \cdots \X_{j_1} - \X^k\right)} \\
&\leq \frac{1}{n^k}  \sum_{1 \leq j_1 < \cdots < j_k \leq n} \norm{ \X_{j_k}\X_{j_{k-1}} \cdots \X_{j_1} - \X^k }\\
&\leq \frac{1}{n^k}  \sum_{1 \leq j_1 < \cdots < j_k \leq n} \norm{ \X_{j_k}\X_{j_{k-1}} \cdots \X_{j_1}} +\norm{ \X^k}\\
 \text{(Using that $\norm{\X_j} \leq L$)} \quad \quad \quad \quad &\leq \frac{2L^k}{n^k}  \binom{n}{k}\\ 
&\leq  \frac{2L^k}{n^k} \left(\frac{en}{k}\right)^k\\
&= 2 \left(\frac{L e}{k}\right)^k
\end{align*}
Hence it remains to show that
$$\sum_{k= \lceil \log(n) \rceil\}}^n \left( \frac{L e}{k}\right)^k \leq \frac{L e^2}{n (e-1)} $$
Let $k_0=\lceil \log(n) \rceil$ in the remainder. First, we observe that  $\left( \frac{Le}{k_0}\right)^{k_0} \leq \frac{Le}{n}$:
\begin{align*}
\left( \frac{Le}{k_0}\right)^{k_0} \leq \frac{Le}{n} \quad &\Leftrightarrow \quad k_0\log(Le) - k_0\log(k_0) \leq \log(Le)-\log(n)\\
&\Leftrightarrow \quad (k_0 - 1)\log(Le) - k_0\log(k_0) \leq -\log(n)\\
&\Leftrightarrow \quad (k_0 - 1)\log(Le) - k_0(\log(k_0)-1) - k_0 \leq -\log(n)\\
\end{align*}
Since $k_0 \geq \log(n),$ it suffices to show that 
$$(k_0 - 1)\log(Le) - k_0(\log(k_0) - 1) \leq 0$$
We consider two cases:
\begin{enumerate}
    \item Case 1: If $L \leq \frac{1}{e}$, then $(k_0 - 1) \log(Le) \leq (k_0 - 1) \log(1) \leq 0.$  Thus we require $-k_0(\log(k_0) - 1) \leq 0$. 
 This clearly holds because $k_0 \geq e$. 
    \item Case 2: If  $L > \frac{1}{e}$, then $\log(Le) > \log(1) = 0 \Rightarrow -\log(Le) < 0$. Since $k_0 \geq Le^2,$ it follows that
    \begin{align*}
    (k_0 - 1)\log(Le) - k_0(\log(k_0) - 1) &\leq (k_0 - 1)\log(Le) - k_0(\log(Le^2) - 1) \\
    &= (k_0 - 1)\log(Le) - k_0(\log(Le) + 1 - 1) \\
    &= -\log(Le) \\
    &< 0.\\
    \end{align*}
\end{enumerate}
Now, for each $k \geq k_0$, 
$$\left(\frac{Le}{k+1}\right)^{k+1} \leq \left(\frac{Le}{k}\right)^{k+1}  \leq \frac{Le}{k}\left(\frac{Le}{k}\right)^{k}
    \leq \frac{Le}{Le^2}\left(\frac{Le}{k}\right)^k = e^{-1}\left(\frac{Le}{k}\right)^k $$
By induction, it follows that 
\begin{align*}
    \left(\frac{Le}{k+1}\right)^{k+1} \leq e^{k_0 - k}\left(\frac{Le}{k_0}\right)^{k_0} &\leq e^{k_0 - k}\frac{Le}{n};
\end{align*}
Hence,
\begin{align*}
    \sum_{k=k_0}^n \left(\frac{Le}{k}\right)^{k}  \leq \frac{Le}{n}\sum_{k=k_0}^n e^{k_0-k} &\leq \frac{Le}{n}\sum_{k=0}^\infty e^{-k} = \frac{Le^2}{n(e-1)}
\end{align*}
\end{proof}

\section{Proof of Lemma \ref{prop:4}}
\label{app:3}

\eXbound*

\begin{proof}
The proof uses only basic analytic tools and inequalities.  Recall the matrix exponential: $e^{\X} :=  \sum_{k=0}^\infty \frac{\X^k}{k!}$.  Let $\sigma = \| \X \|$. Then we have
\begin{align*}
\norm{(\I+\frac{1}{n}\X)^n - e^{\X}} &= \norm{\sum_{k=0}^n \binom{n}{k} \frac{\X^k}{n^k} - \sum_{k=0}^\infty \frac{\X^k}{k!}}\\
&= \norm{\sum_{k=0}^n\frac{\X^k}{k!}\left(\frac{n!}{n^k(n-k)!} - 1 \right) - \sum_{k=n+1}^\infty \frac{\X^k}{k!}}\\
&\leq \norm{\sum_{k=0}^n\frac{\X^k}{k!}\left(\frac{n!}{n^k(n-k)!} - 1 \right)} + \norm{\sum_{k=n+1}^\infty \frac{\X^k}{k!}}\\
&\leq \sum_{k=0}^n\frac{\norm{\X}^k}{k!}\left|\left(\frac{n!}{n^k(n-k)!} - 1 \right)\right| + \sum_{k=n+1}^\infty \frac{\norm{\X}^k}{k!}\\
&= \sum_{k=0}^n\frac{\sigma^k}{k!}\left(1-\frac{n!}{n^k(n-k)!} \right) + \sum_{k=n+1}^\infty \frac{\sigma^k}{k!}\\
&= e^{\sigma} - \sum_{k=0}^n \binom{n}{k}\frac{\sigma^k}{n^k}\\
&= e^{\sigma} - \left( 1 + \frac{\sigma}{n} \right)^n\\
&=  e^{\sigma}- \exp{({n \log(1 + \sigma/n)})} \\
&\leq e^{\sigma} -\exp\left( \sigma - \frac{1}{2}\frac{\sigma^2}{n} \right) \\
&=  e^{\sigma} \left( 1 -  \exp{( -\frac{\sigma^2}{2n})} \right) 
\end{align*}
where in the final inequality, we used that $\log(1 + \sigma/n) \geq \frac{\sigma}{n} - \frac{1}{2}\left(\frac{\sigma}{n} \right)^2$.  Thus, using also that $e^{-x} \geq 1 - x$ for all $x > 0,$
\begin{align*}
\norm{(\I+\frac{1}{n}\X)^n - e^{\X}}  &\leq    e^{\sigma} \frac{\sigma^2}{2n}
\end{align*}
\end{proof}


\end{document}